\documentclass[letterpaper,10pt,journal,onecolumn]{IEEEtran}

\usepackage[letterpaper,margin=1in]{geometry}
\usepackage{amsmath,amssymb,amsfonts}
\usepackage{amsthm}
\usepackage{mathtools}
\usepackage[final]{graphicx}
\usepackage{cite}
\usepackage{color}
\usepackage{float}
\usepackage[hidelinks]{hyperref}

\newtheorem{theorem}{Theorem}

\theoremstyle{definition}
\newtheorem{definition}{Definition}
\theoremstyle{remark}

\theoremstyle{definition}

\newcommand{\States}{\mathcal{S}}
\newcommand{\A}{\mathcal{A}}
\newcommand{\W}{\mathcal{W}}
\newcommand{\X}{\mathcal{X}}

\newcommand{\causal}{\mathrm{C}}
\newcommand{\benchmark}{\mathrm{L}}

\begin{document}

\title{Regret-Optimal Control for Finite-State Systems}
\author{Yishay Polatov,~\IEEEmembership{Student,~IEEE,}
        Oron Sabag,~\IEEEmembership{Senior Member,~IEEE}
\thanks{The authors are with the Department of Computer Science, Hebrew University of Jerusalem, Israel (e-mails: \{yishay.polatov, oron.sabag\}@mail.huji.ac.il).}
}

\maketitle

\begin{abstract}
We study the control of finite-state systems driven by exogenous disturbances, and design causal policies that track the performance of a lookahead benchmark controller. This objective is formalized through dynamic regret, so that favorable disturbance sequences are compared against a strong benchmark, while under adverse disturbance sequences the comparison accounts for the benchmark’s degraded performance. This benchmark-relative framework provides an alternative to classical MDP formulations, which assume i.i.d. disturbances, and to robust control approaches, which optimize against worst-case disturbances. Our main result is a nested dynamic-programming solution that computes both the optimal worst-case regret and a regret-optimal policy. In particular, we introduce the Regret–Bellman operator, whose fixed-point value function feeds into a finite-horizon dynamic program. Numerical examples show that regret-optimal policies interpolate nicely between MDP-based and robust controllers without requiring knowledge of the disturbance distribution, and can even outperform both under i.i.d. or structured disturbances.
\end{abstract}

\section{Introduction}
\label{sec:introduction}
Sequential decision-making problems arise across many domains and involve actions taken under uncertainty. A central question is how to model that uncertainty. One common approach is to assume a known stochastic model and optimize expected performance, as in classical Markov decision processes (MDPs). Another approach is to design policies against the worst case, leading to robust control formulations. The former relies on a prescribed stochastic model, whereas the latter can be overly conservative. In this work, we study a third perspective based on regret, using a superior lookahead benchmark policy to guide policy design.

It is convenient to represent the different approaches for handling uncertainty through disturbance-driven dynamics of the form
\[
s_{t+1}=f(s_t,a_t,w_t),
\]
where \(w_t\) denotes an exogenous disturbance. This representation includes standard stochastic MDPs as a special case when the disturbances are modeled as i.i.d. random variables, and robust control in which the disturbance sequence is chosen in a worst-case manner. In this work, we focus on disturbance-driven systems with finite states and choose the disturbance sequence in a competitive manner. Our objective is to design causal policies whose performance remains close to that of  benchmark policies with access to future disturbances. To achieve this, we compare the policy and the benchmark policy performances under the same disturbance sequence. By taking a worst-case regret (over all disturbance sequences), we allow the policy to exploit favorable sequences by comparing with a strong benchmark, while still yielding a meaningful comparison under adverse realizations, in which even the benchmark itself deteriorates.

We formulate the regret optimization problem as a min-max problem over the difference between the benchmark and the causal policy cumulative rewards. The inner maximization is over all lookahead policies and over all disturbance sequences, while the minimization is over the policy to be designed. The main challenge is that the regret objective couples at each time two immediate rewards that correspond to two information patterns. As a result, the causal controller cannot only depend on the current state.

\textbf{Related work:}
Lookahead information has long been studied in control, especially through model predictive control (MPC) \cite{garcia1989model,qin2003survey,liu2021value, lin2022bounded, yu2020power}. In dynamical systems, recent work has studied lookahead as part of the agent’s information pattern, for example, reward realizations revealed before action selection, and showed that such information can significantly improve achievable value \cite{yu2020power, merlis2024value,merlis2024reinforcement}.
Separately, the literature studies static regret against a single fixed policy \cite{evendar2009online_mdps, hazan2020nonstochastic} and dynamic regret against changing policy sequences
\cite{fei2020dynamic,zhao2022dynamic}. Our approach is different, we do not assume the controller itself has a lookahead information, but instead use finite lookahead as the benchmark class against which a causal policy is evaluated. Most related to our work is regret-optimal control framework for policy design in linear systems with quadratic costs \cite{sabag2021regretoptimal_fullinfo, goel2023regret, didier2022system,martin2024regret,hajar2024regret}. Our work can be viewed as the finite-state version of these works for non-linear systems. 

\textbf{Contributions:}
\begin{itemize}
    \item We formulate a regret-optimal control problem for finite-state systems. The designed policy is evaluated against a lookahead benchmark on the worst-case disturbance sequence.
    \item We derive the optimal regret and a regret-optimal policy solution for both discounted infinite-horizon and finite-horizon regimes. 
    \item We demonstrate the regret-optimal policy on an inventory management example showing that regret-optimal policies interpolate between MDP-based (stochastic) and robust policy designs in the regime of i.i.d. disturbances and also for disturbances that are governed by hidden Markov models.
\end{itemize}

The paper is organized as follows. Section~\ref{sec:problem_setup} presents the problem formulation. Section~\ref{sec:main_results} contains the main results. Section~\ref{sec:experiments} presents numerical experiments illustrating the trade-offs and gains of the regret-optimal approach. Section~\ref{sec:proofs} provides the technical proofs. Section~\ref{sec:conclusions} concludes the paper. 

\section{Setting and Problem Formulation}\label{sec:problem_setup}
In this section, we formalize the control process as a disturbance-driven system. We then present our dynamic regret criterion that enables one to design causal policies that track benchmark policies, in our case, lookahead policies.

\subsection{Notation}
Variables are denoted by small letters, e.g., $s,w$ and subscripts denote their time indices, e.g., $s_t$. Alphabets of the corresponding variables are denoted by calligraphic letters, e.g., $s\in\mathcal S$. For integers $i\le j$, we write $w_{i}^{j}:=(w_i,w_{i+1},\dots,w_j)$. We write $\W^{\mathbb{N}}$ for semi-infinite sequences $\mathbf{w}=(w_0,w_1,\dots)$.

\subsection{Disturbance-Driven Dynamical Systems}
In stochastic MDPs, the state process typically evolves according to a transition kernel
$P_{S^+|S,A}(\cdot\mid s,a)$, i.e.,
\begin{align}
\mathbb{P}(s_{t+1}\mid s_0,a_0,\dots,s_t,a_t)=P_{S^+|S,A}(s_{t+1}\mid s_t,a_t).
\end{align}
The transition kernel can be equivalently represented \cite{powell2009you} using a deterministic function
\begin{align}
\label{eq:dynamics}
s_{t+1}=f(s_t,a_t,w_t),
\end{align}
where $W_t\sim P_W$ is a random variable, see the functional representation lemma, e.g., \cite{li2018strong}. The variable $w_t$ can be viewed as an exogenous disturbance since it does not depend on the state nor the action, and indeed it does not need to be defined as a random variable. In our setup, the exogenous disturbance $w_t$ is a deterministic quantity to be utilized for our regret objective, and we refer to \eqref{eq:dynamics} as a \emph{disturbance-driven} dynamical system. 

Let $\States$, $\A$, and $\W$ be finite sets of states, actions, and disturbances. The dynamic system starts from an initial state $s_0\in\States$ and evolves in discrete time according to \eqref{eq:dynamics}
where $a_t\in\A$ is the action and $w_t\in\W$ is a disturbance.
The system is controlled through a deterministic policy (a controller), defined as a sequence of decision rules at each time, i.e., $\pi=\{\mu_t\}_{t\ge 0}$ with 
\begin{align}\label{eq:def_causal_action}
a_t=\mu_t(s_0, w_{0}^{t-1}).
\end{align}
Note that $a_t$ is chosen before $w_t$ is revealed, but the controller at time $t$ can compute past and current states $s_1,\dots, s_t$ by knowing  $w_{0}^{t-1}$ and the initial state $s_0$. Indeed, the knowledge of $w_{0}^{t-1}$ implies that the action at time $t$ can utilize any information accumulated up to this time, i.e., it is history-dependent. Yet, we will see in Theorem \ref{thm:infinite_main} that our optimal policy only depends on a simple function of the history.

The rewards are given by a time-invariant function 
\begin{equation}
r:\States\times\A\times\W \to \mathbb{R},
\qquad |r(s,a,w)|\le R_{\max}<\infty .
\label{eq:reward_model}
\end{equation}
For a trajectory $(s_t,a_t,w_t)_{t\ge 0}$, the reward is computed by 
\begin{equation}\label{eq:rt_def}
r_t \;:=\; r(s_t,a_t,w_t), \qquad t=0,1,2,\dots.
\end{equation}
The trajectory of the state-actions-disturbances triplet depends on the policy and the disturbance sequence, and thus we will use the shorthand $r_t$ only when these are clear from the context.  

For $\gamma\in(0,1)$, the discounted return of a policy $\pi$ from $s_0$ under a disturbance sequence
$\mathbf w$ is
\begin{equation}
V(s_0,\pi;\mathbf w)
\;:=\;
\sum_{t=0}^{\infty}\gamma^t\, r_t .
\label{eq:V_def}
\end{equation}
For a finite horizon $T\in\mathbb N$, the return is
\begin{equation}
V_T(s_0,\pi;w_{0}^{T-1})
\;:=\;
\sum_{t=0}^{T-1} r_t .
\label{eq:V_T_def}
\end{equation}

\subsection{Lookahead policies and Dynamic Regret}
\label{subsec:policy_classes}
We first define the benchmark of lookahead policies. 
A history-dependent $k$-lookahead policy with $k\ge1$ is a sequence of deterministic functions $\pi^{k} := \{\mu_t^{k}\}_{t\ge 0}$ s.t.
\[
\mu_t^{k}: \States\times\W^{t+k} \to \A,
\quad
a_t=\mu_t^{k}(s_0, w_{0}^{t+k-1}).
\]
The class of $k$-lookahead policies is denoted by $\Pi_k$.

Note that $k=0$ recovers the class of causal policies, i.e., $\Pi_{\causal}:=\Pi_0$ since the controller has no lookahead. More importantly, lookahead policies are nested in the sense  
\(
\Pi_\causal = \Pi_{0}\subseteq \Pi_{1}\subseteq \Pi_{2}\subseteq \cdots
\). This is crucial for our regret definition since the designed causal policy will track a benchmark policy that is \emph{superior} to it in the sense of greater optimal return.

For a causal controller $\pi_{\causal}\in\Pi_\causal$ and an arbitrary benchmark policy $\pi_{\benchmark}$, we define
the (worst-case, pathwise) regret as
\begin{equation}
\mathrm{Regret}(s_0,\pi_{\causal},\pi_{\benchmark})
:=
\sup_{\mathbf w\in\W^{\mathbb N}}
\Big(
V\!(s_0,\pi_{\benchmark};\mathbf w)
\!-V\!(s_0,\pi_{\causal};\mathbf w)
\Big)
\label{eq:regret_pipi}
\end{equation}
The defining feature of \eqref{eq:regret_pipi} is that \emph{both returns are evaluated on the same
disturbance sequence $\mathbf w$}, ensuring an instance-wise (common-path) comparison.
The performance of an optimal  $k$-lookahead policy is superior in terms of return,
\begin{align}
\label{eq:regret_nonnegativity}
\sup_{\pi_\benchmark\in\Pi_k}\mathrm{Regret}(s_0,\pi_\causal,\pi_\benchmark)\ge0.
\end{align} 
We can now define the optimal regret as a min-max problem over the path-wise regret in \eqref{eq:regret_pipi}
\begin{equation}
\begin{aligned}
&\mathrm{Reg}_k^\star(s_0)
:=
\inf_{\pi_\causal\in\Pi_\causal}\ 
\sup_{\pi_\benchmark\in\Pi_k}
\mathrm{Regret}(s_0,\pi_\causal,\pi_\benchmark)
\end{aligned}
\label{eq:Reg_k_star}
\end{equation}
Our regret criterion compares the causal controller, on each disturbance path, to the best $k$-lookahead policy for that same path,
the maximizing lookahead policy may differ from one disturbance sequence to another. Although the regret is written as a supremum over lookahead policies and disturbance sequences, since both suprema are interchangeable, this should be understood as a comparison with the lookahead class $\Pi_k$.

The definitions above adapt directly to a finite horizon $T\in\mathbb N$ by replacing \eqref{eq:regret_pipi} with
\begin{align*}
&\mathrm{Regret}_T(s_0,\pi_{\causal},\pi_{\benchmark})
:=
\sup_{w_{0}^{T-1}\in\W^T}
\Big(
V_T(s_0, \pi_\benchmark, w_0^{T-1})-V_T(s_0, \pi_\causal, w_0^{T-1})
\Big)
\label{eq:regret_pipi_finite}
\end{align*}
\begin{equation}
\mathrm{Reg}_{T,k}^\star(s_0) 
:=
\inf_{\pi_{\causal}\in\Pi_\causal}\ 
\sup_{\pi_{\benchmark}\in\Pi_k}
 \mathrm{Regret}_T(s_0,\pi_{\causal},\pi_{\benchmark})
\label{eq:Reg_k_star_finite}
\end{equation}
Our results below are presented for both the discounted infinite-horizon and the finite-horizon settings.

\section{Main Results}
\label{sec:main_results}

The core challenge in the optimization of \eqref{eq:Reg_k_star} is that the causal controller and the $k$-lookahead benchmark act under different information patterns. In what follows, we show that the regret problem can be decomposed into two coupled dynamic programs in which the first DP is an infinite-horizon with discounted regret costs. The optimal value function of this DP serves as the terminal value function for the second DP that has a finite horizon with $k$ steps. 

The first DP aims to \emph{align} the information available to the causal controller at times $t\ge k$. In particular, its DP state is defined as $x_t=(s_t^{\causal},s_{t-k}^{\benchmark},w_{t-k}^{t-1})$, i.e., it holds its latest state $s_t^\causal$, the benchmark state $s_{t-k}^{\benchmark}$ and the tuple $w_{t-k}^{t-1}$ corresponding to the last $k$ disturbances. Note that the causal controller cannot compute newer states of the benchmark as the latter depends on $w_{t}$. The DP state space is thus 
\begin{equation}
x := \big(s^{\causal}, s^{\benchmark}, w_{0}^{k-1}\big)\ \in\ \X
:=\ \States\times\States\times \W^k,
\label{eq:X_chi_def}
\end{equation}
and the transition function is 
\begin{align}
\label{eq:Ft_def}
x^+ 
&=
\Big(
f(s^{\causal},a^\causal,w),
f\big(s^{\benchmark},a^\benchmark,w_{0}\big),
w_{1}^{k-1}\#w
\Big) 
:= F(x,a^\causal,a^\benchmark,w),
\end{align}
where $w_{1}^{k-1}\#w:=(w_1,\dots,w_{k-1}, w)$ denotes the appending of a vector with a scalar. The immediate cost is defined by a regret that aligns the information available to the controller
\begin{equation}
\rho(x,a^{\causal}, a^{\benchmark}, w):=
r\big(s^{\benchmark}, a^{\benchmark},w_0\big)
-\gamma^k r(s^{\causal},a^{\causal}, w).
\label{eq:rho_def}
\end{equation}
The first DP can be summarized using a  \emph{Regret-Bellman operator} that computes a min-max DP transition.

\begin{definition}[Regret Bellman Operator]
\label{def:bellman_operator}
Let $\mathcal{B}(\X)$ be the
Banach space of bounded functions $J:\X\to\mathbb{R}$ with the sup norm.
The Regret-Bellman operator $\mathcal{T}:\mathcal{B}(\X)\to\mathcal{B}(\X)$ is given by
\begin{equation}
(\mathcal{T}J)(x)
=
\min_{a^\causal\in\A}
\max_{\substack{w\in\W \\ a^\benchmark\in\A}}
\Big\{
\rho(x,a^\causal,a^\benchmark,w)+\gamma\,J
\big(x^{+}\big)
\Big\}.
\label{eq:bellman_discounted}
\end{equation}
\end{definition}
We will show below that the Regret-Bellman operator $\mathcal{T}$ is a contractive operator. Thus, it has a unique fixed-point that we denote by $J^\star_k(x)$. This corresponds to the regret-to-go from time $k$ onward. This fixed point can be computed using standard Value Iteration algorithm. The second DP utilizes $J^\star_k(x)$ as its terminal cost and accounts the first $k$ stages, summarizes using the following \emph{Prefix DP}. 

\begin{definition}[Prefix DP]
\label{def:prefix}
For $0\le t \le k$, we define the prefix value functions as mappings $G_t:\States\times\W^t\to\mathbb{R}$. The value functions are computed by the backwards DP  
\begin{align}
&G_t(s,w_{0}^{t-1}) =
\min_{a^\causal\in\A}\ \max_{w\in\W}
\left\{
  G_{t+1}\big(f(s,a^\causal,w),w_{0}^{t-1}\#w\big) 
  -\gamma^t r(s, a^\causal, w)
\right\}
\label{eq:Jt_def}
\end{align}
where the terminal condition is $G_k(s,w_{0}^{k-1}) = J_k^\star(x)$ ($J_k^\star(x)$ is the fixed point of \eqref{eq:bellman_discounted}), and its argument  is $x=(s, s_0, w_{0}^{k-1})$.
\end{definition}
We are ready to state our results regarding the optimal regret and the optimal policy. 
\begin{theorem}[Optimal regret and optimal policy]
\label{thm:infinite_main}
The optimal regret is given by
\begin{equation}
\mathrm{Reg}_k^\star(s_0)=G_0(s_0),
\label{eq:Reg_equals_J0}
\end{equation}
where $G_0(s_0)$ is computed by \eqref{eq:Jt_def} in Definition \ref{def:prefix}.

A regret-optimal policy for $t\ge k$ is given by
\begin{align}
a^\star(x)\in
\arg\min_{a^\causal\in\A}
\max_{\substack{w\in\W \\ a^\benchmark\in\A}}
\Big\{\mspace{-3mu}
\rho(x,a^\causal,a^\benchmark,w) \mspace{-3mu}+\mspace{-3mu}\gamma J_k^\star\big(x^{+}\big)
\mspace{-3mu}\Big\},
\label{eq:areg_def}
\end{align}
where $\rho(\cdot)$ is the aligned regret in \eqref{eq:rho_def}, $x^+$ is the new DP state in \eqref{eq:Ft_def} and $J_k^\star$ is the fixed-point of the Regret-Bellman operator. For $0\le t<k$, a regret-optimal policy is given by the minimizer of \eqref{eq:Jt_def}, i.e., 
\begin{align*}
a_t^\star(s^\causal, w_{0}^{t-1})
\in
\arg\min_{a^\causal\in\A}
\max_{w\in\W}
\Big\{
-\gamma^t r(s^\causal,a^\causal,w)
+ G_{t+1}\big(f(s^\causal,a^\causal,w),w_{0}^{t-1}\#w\big) \Big\}.
\end{align*}
\end{theorem}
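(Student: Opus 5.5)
The plan is to rewrite the regret as the value of a sequential min–max game and identify that game's value with $G_0(s_0)$ through the two dynamic programs.

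\textbf{Step 1: remove the benchmark policy class.} Fix a causal policy $\pi_\causal$. Once a disturbance path $\mathbf w$ is fixed, a $k$-lookahead policy can realize any open-loop action sequence $(a^\benchmark_t)_{t\ge0}$. Hence
\[
\sup_{\pi_\benchmark\in\Pi_k}\mathrm{Regret}(s_0,\pi_\causal,\pi_\benchmark)=\sup_{\mathbf w}\ \sup_{(a^\benchmark_t)}\Big(\sum_t\gamma^t r^\benchmark_t-\sum_t\gamma^t r^\causal_t\Big),
\]
and the lookahead structure disappears from the objective.

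\textbf{Step 2: re-index so both sums are aligned.} Pair the benchmark reward at time $t-k$ with the causal reward at time $t$:
\[
\sum_t\gamma^t r^\benchmark_t-\sum_t\gamma^t r^\causal_t
=\sum_{t=0}^{k-1}\big(-\gamma^t r^\causal_t\big)+\sum_{t\ge k}\gamma^{t-k}\big(r^\benchmark_{t-k}-\gamma^k r^\causal_t\big).
\]
The summand for $t\ge k$ is exactly $\rho(x_t,a^\causal_t,a^\benchmark_{t-k},w_t)$ with $x_t=(s^\causal_t,s^\benchmark_{t-k},w_{t-k}^{t-1})$. The state evolves by $x_{t+1}=F(x_t,a^\causal_t,a^\benchmark_{t-k},w_t)$, and $x_k=(s^\causal_k,s_0,w_0^{k-1})$. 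So the regret equals a prefix of $k$ causal-only stages, followed at time $k$ by a discounted game with stage cost $\rho$ and weight $1$, which matches the terminal condition $G_k=J_k^\star$.

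\textbf{Step 3: the Regret-Bellman operator.} Show that $\mathcal T$ is a $\gamma$-contraction on $\mathcal B(\X)$, since $\min$ and $\max$ are nonexpansive and $\rho$ is bounded by $(1+\gamma^k)R_{\max}$. Banach's theorem then gives a unique fixed point $J_k^\star$.

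\textbf{Step 4: the lower bound $\mathrm{Reg}_k^\star\ge G_0$.} For any causal policy, construct the adversary backward. At each $t\ge k$ it picks $(w_t,a^\benchmark_{t-k})$ attaining the inner max of $\mathcal T J_k^\star$ at the current $x_t$ and $a^\causal_t$. For $t<k$ it picks the maximizing $w_t$ in the prefix DP. This is legitimate because the adversary sees $a^\causal_t$ and all past variables. A telescoping argument, using $J_k^\star=\mathcal T J_k^\star$ together with the bound $\gamma^N\|J_k^\star\|_\infty\to0$ on truncation, shows the realized regret is at least $G_0(s_0)$.

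\textbf{Step 5: the upper bound, which I expect to be the main obstacle.} The causal controller must achieve $G_0(s_0)$. The difficulty is that the policy $a^\star(x)$ in the theorem uses $s^\benchmark_{t-k}$. This quantity depends on the benchmark's actions, which the causal controller does not observe: it knows only $w_0^{t-1}$.

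My first attempt is a max-over-benchmark bound that does not require knowing $s^\benchmark$. For the DP game in which the minimizer does observe $x$, telescoping with $J_k^\star=\mathcal TJ_k^\star$ shows that every adversary sequence $(w,a^\benchmark)$ yields regret at most $G_0(s_0)$. The task is then to show that the perfect-observation value is still achievable when the controller uses only $w_0^{t-1}$.

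I would try to establish this in one of two ways:
\begin{itemize}
\item Make the controller track a \emph{virtual} benchmark state, updated with the maximizing $a^\benchmark$ at each stage given the realized $w$. Then show via the fixed-point inequality that deviating benchmark action sequences cannot increase the total regret.
\item Alternatively, show that $J_k^\star$ and the minimizer depend on $s^\benchmark$ only in a way the causal controller can reconstruct. This is natural because the supremum over $a^\benchmark$ sequences given $w$ is a deterministic function of $w$ and $s_0$, namely the optimal $k$-lookahead trajectory, which is computable from $w_0^{t-1}$ at time $t$.
\end{itemize}
The second route seems the most promising. For fixed $\mathbf w$ the benchmark's optimal return is attained by one trajectory whose first $t-k$ actions depend only on $w_0^{t-1}$. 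Under this route I would need to verify carefully that the state this trajectory induces, $s^\benchmark_{t-k}$, is the one consistent with the DP maximization.

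Once the upper bound holds, the prefix policy for $t<k$ follows from the finite backward recursion \eqref{eq:Jt_def} by the same telescoping argument, and combining it with the lower bound gives $\mathrm{Reg}_k^\star(s_0)=G_0(s_0)$ together with optimality of the stated policies.
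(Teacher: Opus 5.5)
Your Steps 1--4 are sound, and Step 4 correctly proves $\mathrm{Reg}_k^\star(s_0)\ge G_0(s_0)$. Against any causal policy, an adversary that plays the inner maximizer of $\mathcal T J_k^\star$ (and of the prefix DP) at the true state, followed by telescoping, gives the bound. The gap is Step 5, which you leave open, and neither route can close it.

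Route 2 rests on a false claim. By your own Step 1, for each fixed $\mathbf w$ the supremum over $\Pi_k$ is attained by an open-loop sequence tailored to the \emph{whole} path $\mathbf w$. So $\mathrm{Reg}_k^\star$ does not even depend on $k$, the benchmark's early actions may depend on arbitrarily distant disturbances, and $s^{\benchmark}_{t-k}$ is not a function of $w_0^{t-1}$.

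Route 1 fails because $J_k^\star=\mathcal T J_k^\star$ controls the realized stage cost only when the minimizer acts at the true $x_t$. Acting at a virtual $\tilde x_t\neq x_t$ yields no inequality.

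The paper's proof does not supply the missing idea either. Its Step 2 asserts $\mathcal C=\mathcal T\mathcal C$ ``by one-step decomposition''. This silently lets the controller condition on $x^+$, i.e., on benchmark actions it never observes. What that argument (and your perfect-observation telescoping) actually establishes is that $G_0(s_0)$ is the value of the game in which the controller additionally observes $s^{\benchmark}_{t-k}$. Causal policies form a subclass of those strategies, so only $G_0(s_0)\le \mathrm{Reg}_k^\star(s_0)$ follows.

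In fact the inequality can be strict, so no argument will complete Step 5. Take the following system:
\begin{itemize}
\item $k=1$, $\W=\{0,1\}$, $\A=\{0,1,\varnothing\}$.
\item States are pairs $(p,\ell)$. The phase $p\in\{0,1,2,3\}$ advances $0\to1\to2\to3\to3$. The lock $\ell\in\{\varnothing,0,1\}$ is set to $a_0$ at phase $0$ and frozen thereafter.
\item All rewards vanish except at phase $2$, where $r=2\cdot\mathbf 1\{\ell=w\}$ if $\ell\neq\varnothing$ and $r=\mathbf 1\{a=w\}$ if $\ell=\varnothing$.
\end{itemize}
A causal controller commits to its phase-$2$ guess (via $a_0$ or $a_2$) before $w_2$ is revealed, so the adversary drives its return to $0$. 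The clairvoyant benchmark locks $w_2$ at time $0$ and earns $2\gamma^2$. Hence $\mathrm{Reg}_1^\star(s_0)=2\gamma^2$.

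In the DP, however, $x_2=(s^{\causal}_2,s^{\benchmark}_1,w_1)$ reveals the benchmark's lock $g$. The minimizer that does not lock and plays $a_2=g$ keeps the regret at most $\gamma^2$: a locking benchmark yields regret $\gamma^2\mathbf 1\{g=w_2\}$, and a non-locking one earns at most $\gamma^2$. Evaluating the recursion directly gives $G_0(s_0)=\gamma^2<\mathrm{Reg}_1^\star(s_0)$.

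The clairvoyant benchmark's actions leak future disturbances into $x$, which is exactly the issue you flagged. So the equality, and the causal implementability of $a^\star(x)$, require an extra assumption or a different DP state, not a more careful proof.
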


Theorem \ref{thm:infinite_main} shows that the regret-optimal controller does not depend on the full history of disturbances. For $t\ge k$, its policy is stationary and $a_t$ only depends on $x_t=(s_t^\causal,s_{t-k}^\benchmark, w_{t-k}^{t-1})$, i.e. two states and a disturbance window of size $k$. Thus, the controller can be implemented with finite memory.
For comparison, a standard MDP design requires value iteration over a state space of size $|\States|$ 
while robust design introduces an additional maximization over disturbances, but retains the same state dimension. 
Our regret formulation is required to track both causal and lookahead states as well a length $k$ disturbance window, 
resulting in complexity scaling as $|\States|^2|\W|^k$.
In practice, however, we illustrate in Section \ref{sec:experiments} via numerical examples that small values of $k$ already yield policies that are competitive with respect to MDP and robust policies.

\subsection*{Optimal regret for the Finite-horizon case}
Fix a horizon $T\in\mathbb{N}$, we retain the DP state space construction from
\eqref{eq:X_chi_def}--\eqref{eq:Ft_def} with $\gamma=1$.
The finite-horizon nuance is that
the $k$ tracking cannot ``collect'' the last $k$ lookahead rewards within stages
$t<T$. Consequently, the regret decomposition includes a terminal tail value for the lookahead controller.

\begin{definition}[Backward DPs]
\label{def:finite_backward_dp}
Define the following DPs:\\
\emph{Tail value:} define  $\Psi_k:\States\times\W^k\to\mathbb R$ by
\begin{align}
\Psi_k(s^{\benchmark}, w_{0}^{k-1})
:=
\sup_{\pi_\benchmark\in\Pi_k}
\sum_{\tau=T-k}^{T-1} 
r\!\big(s_\tau^{\benchmark},a_\tau^\benchmark,w_\tau\big),
\qquad 
s^{\benchmark}_{T-k} = s^\benchmark, 
\quad w_{0}^{k-1}=w_{T-k}^{T-1}.
\label{eq:tail_term}
\end{align}
\emph{Regret DP:}
For $x=(s^{\causal},s^{\benchmark},w_0^{k-1})$,
and $k\le t<T$ let
\begin{align}
J_t(x)
=
\min_{a^\causal\in\A}\ \max_{\substack{w\in\W\\a^\benchmark\in\A}}
\Big\{
\rho(x,a^\causal,a^\benchmark,w) + J_{t+1}\!\big(x^{+}\big)
\Big\},
\label{eq:finite_bellman}
\end{align}
with terminal cost $J_T(x):=\Psi_k(s^{\benchmark}, w_{0}^{k-1})$.\\
\emph{Prefix DP:}
For $0 \le h < k$ define  $G_h:\States\times\W^h\to\mathbb R$ 
\begin{align}
G_h(s,w_{0}^{h-1})&=
\min_{a^\causal\in\A} \max_{w\in\W}
\big\{
- r(s, a^\causal, w) 
+ G_{h+1}\big(f(s, a^\causal,w), w_{0}^{h-1}\#w\big)  
\big\}
\label{eq:Jt_finite}
\end{align}
with the terminal condition $G_k(s,w_{0}^{k-1}):=J_k\big(x\big)$ for $x=(s,s_0,w_{0}^{k-1})$.
\end{definition}

\begin{theorem}[Finite-horizon regret DP]
\label{thm:finite_main}
The backward DP in Definition \ref{def:finite_backward_dp} are well-defined
and an optimal regret-minimizing policy 
is any selector that chooses 
\begin{align}
a_{t\ge k}^\star(x)&\in
\arg\min_{a^\causal\in\A}\ 
\max_{\substack{w\in\W \\ a^\benchmark\in\A}}
\left\{
    \rho(x,a^\causal,a^\benchmark,w) 
    + J_{t+1}\!\big(x^{+}\big)
\right\}
\label{eq:finite_selector_regret}
\end{align}
\begin{align*}
a_{t<k}^\star(s^\causal,w_{0}^{t-1})&\in
\arg\min_{a^\causal\in\A} \max_{w\in\W}
\big\{
- r(s^\causal, a^\causal, w)
+ G_{t+1}\big(f(s^\causal, a^\causal,w), w_{0}^{t-1}\#w\big)  
\big\}
\end{align*}
Moreover, the optimal regret is given by
\begin{align}
\mathrm{Reg}_{T,k}^\star(s_0)
= G_0(s_0).
\end{align}
\end{theorem}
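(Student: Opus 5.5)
The plan is to reduce the min–max regret to a single sequential game in which the causal controller moves first and the adversary then chooses the disturbance, with the benchmark's choices folded into the adversary. This reduction starts from two standard facts.

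First, since the regret takes a supremum over both $\pi_\benchmark\in\Pi_k$ and $w_0^{T-1}$, we may swap the two suprema. For a fixed disturbance path, the best $k$-lookahead benchmark is simply the best open-loop action sequence on that path. So
\[
\sup_{\pi_\benchmark}\mathrm{Regret}_T=\sup_{w_0^{T-1}}\Big(\max_{a_0^{\benchmark},\dots,a_{T-1}^{\benchmark}}\sum_{t}r(s^\benchmark_t,a^\benchmark_t,w_t)-V_T(s_0,\pi_\causal;w_0^{T-1})\Big).
\]
The inner maximum is itself a deterministic DP along the path. The key reindexing is then to pair the benchmark reward at time $t-k$ with the causal reward at time $t$, for $k\le t<T$. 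This leaves causal rewards at $t<k$ unpaired, which form the prefix, and benchmark rewards at $T-k\le\tau<T$ unpaired, which form the tail $\Psi_k$.

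Second, I claim that the benchmark action $a^\benchmark_{t-k}$ may be chosen at stage $t$, jointly with the adversary's $w_t$, without loss. The benchmark's action at $t-k$ needs only $w_0^{t-1}$, which is already known at stage $t$. Conversely, letting the adversary pick $a^\benchmark_{t-k}$ after seeing $a^\causal_t$ gives it no extra power. The reason is that the total is a supremum over paths of a maximum over open-loop benchmark sequences, and a max over adversary strategies that are adapted to the causal actions equals the max over fixed sequences once the causal policy is fixed: the causal actions are themselves deterministic functions of $w$.

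With the reindexing in hand, I would define, for a fixed causal policy, the cost-to-go from stage $t\ge k$ given history $(s_0,w_0^{t-1})$ and benchmark state $s^\benchmark_{t-k}$. I would then prove by backward induction on $t=T,T-1,\dots,k$ that the optimal (inf over causal continuations) cost-to-go equals $J_t(x_t)$. There are two directions.

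For the lower bound, note that at each stage any causal action $a$ faces an adversary who can choose $(w,a^\benchmark)$ attaining the inner max. Concatenating these choices yields a path whose realized regret is at least $J_t$.

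For the upper bound, the selector $a^\star_t(x_t)$ guarantees, for every $(w,a^\benchmark)$, that the stage cost plus $J_{t+1}$ is at most $J_t$. Telescoping then gives the bound uniformly over paths and benchmark sequences, with the terminal stage contributing exactly $\Psi_k(s^\benchmark_{T-k},w_{T-k}^{T-1})$ because the tail benchmark rewards are maximized with full knowledge of the remaining window.

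A key point here is that the dependence on history beyond $x_t$ is irrelevant. The future costs depend on the past only through $x_t$, since future causal states follow from $s^\causal_t$ and future benchmark states follow from $s^\benchmark_{t-k}$ together with the window $w_{t-k}^{t-1}$. Hence the inf over history-dependent policies is attained by Markov policies in $x$.

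For the prefix stages $h<k$, the same argument applies, but only the causal reward is present. The benchmark has not yet been paired, and its state remains $s_0$, which is why the terminal argument at stage $k$ is $x=(s,s_0,w_0^{k-1})$. Backward induction from $G_k=J_k$ down to $G_0$ then gives $\mathrm{Reg}^\star_{T,k}(s_0)=G_0(s_0)$, with the stated selectors optimal. Well-definedness is immediate: all sets are finite, so every min and max is attained, and $\Psi_k$ is a finite maximum over action sequences of length $k$.

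The step I expect to be the main obstacle is justifying the exchange in the second fact: moving the benchmark maximization inside the stagewise min–max, with $a^\benchmark_{t-k}$ chosen after $a^\causal_t$. I would handle it by fixing the causal policy and noting that every quantity is a deterministic function of $w_0^{T-1}$. The joint supremum over $(w,a^\benchmark)$ sequences can then be written as nested stagewise maxima, which matches the adversary's role in \eqref{eq:finite_bellman} exactly; after that, the induction above applies verbatim.
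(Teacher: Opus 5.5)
\textbf{Gap: the achievability (upper-bound) direction.} Your lower-bound half is fine. For a fixed $\pi_{\causal}\in\Pi_{\causal}$, whose actions depend only on $(s_0,w_0^{t-1})$, the adversary can choose $(w_t,a^{\benchmark}_{t-k})$ greedily from the stagewise max, which gives $\mathrm{Reg}^\star_{T,k}(s_0)\ge G_0(s_0)$.

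The upper bound is where the argument fails. The selector $a_t^\star(x_t)$ reads $s^{\benchmark}_{t-k}$. That state is determined by the benchmark's past actions, not by $(s_0,w_0^{t-1})$, so the selector is not a policy in $\Pi_{\causal}$. Your telescoping therefore only shows that, for each fixed benchmark, a controller that tracks \emph{that} benchmark's state achieves $G_0(s_0)$. In other words, it proves $\sup_{\pi_{\benchmark}}\inf_{\pi_{\causal}}\mathrm{Regret}_T\le G_0(s_0)$, whereas $\mathrm{Reg}^\star_{T,k}$ has the infimum outside.

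Your exchange argument (adapted adversaries are no stronger than fixed sequences once $\pi_{\causal}$ is fixed) needs the causal actions to be functions of $w$ alone. That is exactly what fails for the selector. For the same reason, ``Markov in $x$'' is not available when $x$ is not part of the controller's information.

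\textbf{The gap cannot be patched, because the claimed equality is false.} Take $\mathcal W=\mathcal A=\{0,1,2\}$, $T=3$, $k=1$. Let $\sigma$ be the initial state, and add states $(i,b)$ with $i\in\{1,2\}$, $b\in\{0,1,2\}$. Set $f(\sigma,a,w)=(1,a)$ and $f((i,b),a,w)=(2,b)$. All rewards are $0$ except $r((2,b),a,w)=\mathbf 1\{b=w\}+\mathbf 1\{a=w\}$.

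By your own first fact, the benchmark earns $2$ on every path, by playing $a_0=a_2=w_2$. A causal controller's $a_0$ and $a_2(w_0,w_1)$ cover at most two values of $w_2$. The adversary picks the third value, so $\mathrm{Reg}^\star_{3,1}(\sigma)=2$.

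In the DP, however, $J_3=1+\mathbf 1\{b=w_2\}$ and
\[
J_2\big((2,c),(1,b),w_1\big)=1+\min_{a\in\mathcal A}\max_{w\in\mathcal W}\big[\mathbf 1\{b=w\}-\mathbf 1\{c=w\}-\mathbf 1\{a=w\}\big]=1 .
\]
For $c\neq b$, the minimum is attained only by $a=b$, i.e., by copying the benchmark's bet. That bet is visible in $x_2$ but not to any causal policy. Hence $J_1\equiv 1$ and $G_0(\sigma)=1<2$.

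As a further check, with $k=2$ the same DP returns $2$, even though your first fact shows that $\mathrm{Reg}^\star_{T,k}$ does not depend on $k$. The paper's proof passes over this same point with the phrase ``by one-step decomposition,'' so following it does not close the gap. The valid content of the argument is the inequality $\mathrm{Reg}^\star_{T,k}(s_0)\ge G_0(s_0)$.
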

Clearly, the optimal policy is not stationary, but the finite-horizon case enables one to consider time-varying state evolution in \eqref{eq:dynamics}.

\section{Numerical Example}
\label{sec:experiments}
We study a numerical example to demonstrate the performance of our regret-optimal policy with respect to existing controllers.

\emph{Inventory management:} We consider the inventory management problem, e.g., \cite{zipkin2008old}, whose dynamics and cost are 
\begin{align*}
s_{t+1} &=(s_{t} - w_t)^+ + a_t, \quad
c(s_t,a_t,w_t) = h\,(s_{t}-w_t)^+ + p\,(w_t-s_{t})^+,
\end{align*}
where $s_t$ is the inventory level, $a_t$ is the order and $w_t$ is demand.
Note that this is the lost-sales model since the state is nonnegative and unmet demands are not backlogged.
We use the parameters in \cite{zipkin2008old}: the holding cost is $h=1$, the lost sales penalty is $p=9$ (lead-time parameter is $L=1$). The discount factor is fixed to $\gamma=0.995$.
In this section we take
\(
r(s_t,a_t,w_t) = -\,c(s_t,a_t,w_t),
\)
so maximizing total reward is equivalent to minimizing total cost.

We first consider the performance of different controllers for a disturbance sequence that is $i.i.d.$ with a Poisson distribution of rate $\lambda$. We evaluate the performance of an MDP policy designed for $\lambda=5$, robust policy and the regret-optimal policies with $k=1,2$.

\begin{figure}[ht]
    \centering
    \includegraphics[width=0.5\linewidth]{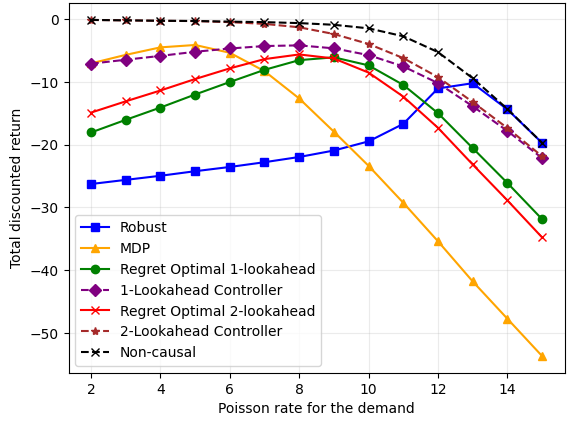}
    \caption{Performance comparison of causal controllers for the inventory management.}
    \label{fig:InfinitePossionChangeVsCost}
\end{figure}

We study the effect of drawing demand from a Poisson process with different rates,
Fig.~\ref{fig:InfinitePossionChangeVsCost} shows that regret-optimal controllers interpolate between MDP policy and robust policies. For rates that are not too small nor too high (adversary), our regret minimizing controllers outperform both MDP and robust policies. Moreover, for small demand rates, larger $k$ may become overly conservative and therefore underperform w.r.t. smaller $k$ values in non-adversarial environments.

We also consider a non-i.i.d.\ demand model generated by an underlying two-state Hidden Markov Model (HMM). The hidden state indicates whether the system is in a low-demand or high-demand regime, and, conditional on the current regime, demands are sampled from a Poisson distribution with the corresponding rate. The regime evolves according to a Markov chain, with persistence probability \(0.9\) in each state. This introduces temporal dependence in the disturbance sequence, so the demands are no longer sampled independently across time.

Fig.~\ref{fig:averageRewardLambda4noIID} shows that in a low-demand setting the regret-optimal controllers balance the nominal and robust policies. Fig.~\ref{fig:averageRewardLambda16noIID} shows the corresponding behavior in a high-demand setting, where the robust policy is typically more competitive. Fig.~\ref{fig:averageRewardLambda8noIID} shows that in an intermediate-demand regime the regret-optimal controllers can outperform both the nominal and robust policies.

\begin{figure}[t]
    \centering
    \begin{minipage}{0.5\columnwidth}
        \centering
        \includegraphics[width=\linewidth]{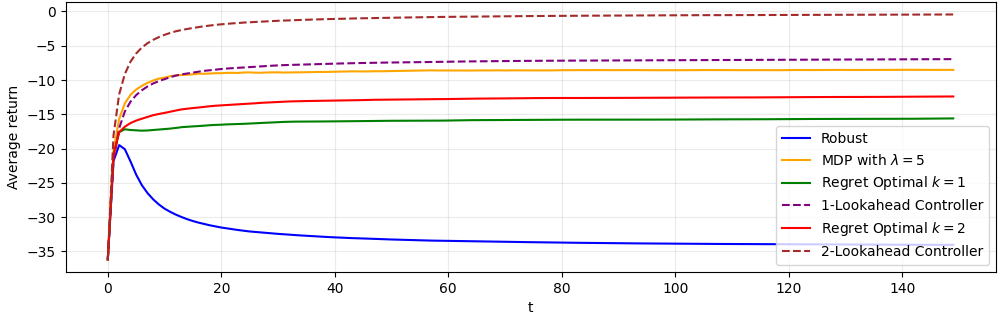}
        \caption{Reward over time: $\lambda_{\mathrm{low}}=4, \lambda_{\mathrm{high}}=7$.}
        \label{fig:averageRewardLambda4noIID}
    \end{minipage}
    \hfill
    \begin{minipage}{0.5\columnwidth}
        \centering
        \includegraphics[width=\linewidth]{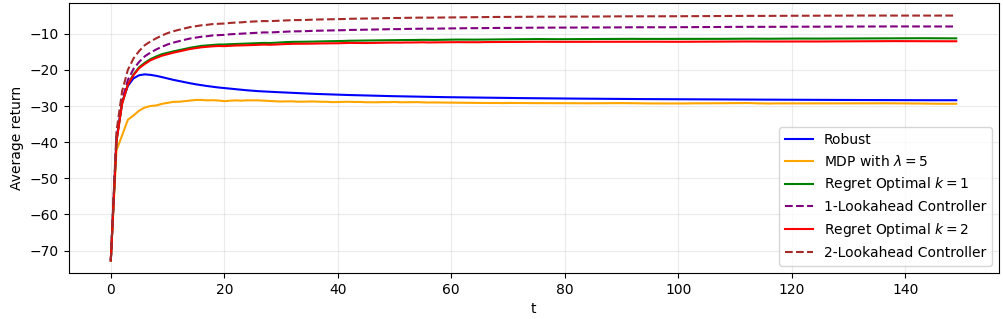}
        \caption{Reward over time: $\lambda_{\mathrm{low}}=8, \lambda_{\mathrm{high}}=11$.}
        \label{fig:averageRewardLambda8noIID}
    \end{minipage}
    \hfill
    \begin{minipage}{0.5\columnwidth}
        \centering
        \includegraphics[width=\linewidth]{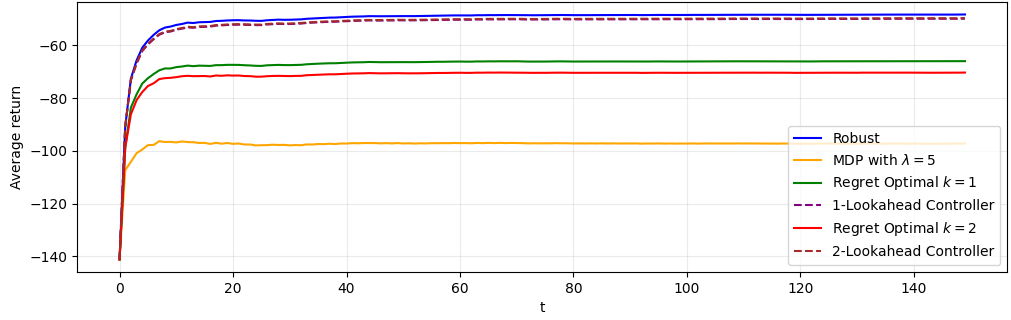}
        \caption{Reward over time: $\lambda_{\mathrm{low}}=16, \lambda_{\mathrm{high}}=19$.}
        \label{fig:averageRewardLambda16noIID}
    \end{minipage}
\end{figure}

\section{Proofs}
\label{sec:proofs}

Throughout this section, we use the shorthands 
\(
r_t^\causal := r(s_t^\causal, a_t^\causal, w_t),
\quad
r_t^\benchmark := r(s_t^\benchmark, a_t^\benchmark, w_t),
\quad\rho_t:=\rho\big(x_t,a_t^\causal,a_{t-k}^\benchmark,w_t\big),
\)
with $\rho(\cdot)$ as defined \eqref{eq:rho_def}.

\newcommand{\pstep}[1]{\par\smallskip\noindent\textbf{#1}\quad}

\begin{proof}[Theorem~\ref{thm:infinite_main}]
The proof has three steps. We begin by writing the optimal regret as a nested two-stage optimization. We then show that the inner optimization corresponds to an infinite-horizon DP with discounted rewards, and its optimal value is equal to the fixed-point of the Regret-Bellman operator in \eqref{eq:bellman_discounted}. Finally, we use the fixed-point to compute the outer optimization of the regret.

\pstep{Step 1: Regret as a nested optimization.}
In this step, we show that the regret can be written as the optimization
\begin{align}\label{eq:regret_nested_opt}
\mathrm{Reg}^\star(s_0)
= &\min_{\{\mu_i^C\}_{i=0}^{k-1}} 
\max_{w_0^{k-1}}
\Bigg[ 
-\sum_{t=0}^{k-1}\gamma^t r_t^\causal
+ 
\min_{\{\mu_i^C\}_{i=k}^{\infty}} \max_{\pi_\benchmark\in\Pi_k}
\max_{w_{k}^{\infty}\in\W^{\mathbb N}}
\sum_{t=0}^{\infty}\gamma^t\,
\rho_{t+k}
\Bigg].
\end{align}
First, recall that the regret, for fixed policies and disturbance, is defined as the discounted returns' difference. We can then derive the identity
\begin{align}\label{eq:proof_decomp_iden}
\nonumber
V(s_0, \pi_\benchmark, \mathbf{w}) 
- V(s_0, \pi_\causal, \mathbf{w}) 
&= \sum_{t=0}^\infty \gamma^t (r_t^L - r^C_t)\\
&= -\sum_{t=0}^{k-1}\gamma^t\,r_t^\causal + \sum_{t=0}^\infty \gamma^t(r_t^L - \gamma^k r^C_{t+k})\nonumber \\
&= -\sum_{t=0}^{k-1}\gamma^t\,r_t^\causal + \sum_{t=0}^\infty \gamma^t \rho_{k+t}. 
\end{align}
To show \eqref{eq:regret_nested_opt}, we note that a maximization over $\pi_\benchmark$ in~\eqref{eq:proof_decomp_iden},
only affects the second sum since the prefix term $-\sum_{t=0}^{k-1}\gamma^t r_t^\causal$ does not depend on it.
Also, split $\mathbf{w}=(w_{0}^{k-1},w_{k}^{\infty})$ and $\pi_\causal=\{\mu_t^C\}_{t\ge0}=(\{\mu_t^C\}_{t=0}^{k-1},\{\mu_t^C\}_{t\ge k})$.
The prefix rewards $r_t^\causal$ for $t=0,\dots,k-1$ depend on the policy mappings $\{\mu_t^C\}_{t=0}^{k-1}$
and the prefix disturbances $w_0^{k-1}$, and is independent of $\{\mu_t^C\}_{t\ge k}$ and $w_{k}^{\infty}$.
Combining with \eqref{eq:proof_decomp_iden} yields \eqref{eq:regret_nested_opt}.

Fix a prefix strategy $\{\mu_i^C\}_{i=0}^{k-1}$ and a prefix disturbance $w_0^{k-1}$.
By determinism of \eqref{eq:dynamics}, the causal state at time $k$ i.e. $s_k^\causal$ is uniquely determined.
Moreover, the benchmark trajectory starts from $s_0$ and the benchmark policy at time $t$ may depend on $w_{0}^{t+k-1}$,
hence the realized prefix $w_0^{k-1}$ is a fixed parameter in all subsequent benchmark decisions.
Therefore, the infinite-sum
depends on the past only through $(s_k^\causal,w_0^{k-1})$, captured by the DP state
\(
x_k \ :=\ \big(s_k^\causal,\ s_0,\ w_0^{k-1}\big).
\)

Before proceeding to the next steps, we denote the inner optimization as 
\begin{align}
\mathcal{C}(x)
:=
\min_{\{\mu_i^C\}_{i=k}^{\infty}}
\max_{\substack{\pi_\benchmark\in\Pi_k \\ w_{k}^{\infty}\in\W^\mathbb{N}}}
\left[
\sum_{n=0}^{\infty}\gamma^n\rho(x_n, a_n^\causal,a_n^\benchmark, w_{k+n})
\right].
\label{eq:continuation_def}
\end{align}
Thus, the optimal regret can be written as 
\begin{align}
\mathrm{Reg}^\star(s_0) &= \min_{\{\mu_i^C\}_{i=0}^{k-1}} \max_{w_0^{k-1}} \left\{ -\sum_{t=0}^{k-1}\gamma^t r_t^\causal
+ \mathcal C(x_k) \right\},
\qquad 
x_k=(s_k^\causal, \quad s_0, w_{0}^{k-1}).
\label{eq:prefix_plus_continuation}
\end{align}

\pstep{Step 2: Computing $\mathcal C(x)$.}
We prove that $\mathcal C(x)$ is equal to the fixed-point of the Regret-Bellman operator in Definition~\ref{def:bellman_operator}.
Let \(J,J'\in\mathcal B(\X)\) and set \(\delta:=\|J-J'\|_\infty\).
For any fixed \((x,a^\causal,a^\benchmark,w)\),
\begin{align*}
&\Big|
\rho(x,a^\causal,a^\benchmark,w)
+\gamma J\!\big(F(x,  a^\causal, a^\benchmark, w)\big) 
-\rho(x,a^\causal,a^\benchmark,w)
-\gamma J'\!\big(F(x,  a^\causal, a^\benchmark, w)\big)
\Big|
\le \gamma\delta.
\end{align*}
Taking \(\max_{w,a^\benchmark}\) preserves the bound, and taking
\(\min_{a^\causal}\) preserves it as well. Therefore
\[
\|\mathcal TJ-\mathcal TJ'\|_\infty
\le
\gamma\|J-J'\|_\infty.
\]
Hence \(\mathcal T\) is a \(\gamma\)-contraction on \(\mathcal B(\X)\), and by Banach's
fixed-point theorem it admits a unique fixed point denoted \(J_k^\star\).

We now show that the continuation value \(\mathcal C(x)\) defined in
\eqref{eq:continuation_def} is a fixed point of \(\mathcal T\).
Indeed, once the continuation starts from a state \(x\in\X\), the next continuation
state is
\(
x^+=F(x,a^\causal,a^\benchmark,w),
\)
so the future depends on the past only through the current state \(x\). Therefore, by
one-step decomposition of the continuation problem,
\begin{align*}
\mathcal C(x)
=
\min_{a^\causal\in\A}
\max_{\substack{w\in\W\\ a^\benchmark\in\A}}
\Big\{
\rho\!(x,a^\causal,a^\benchmark,w)
+\gamma\,\mathcal C\!(x^+)
\Big\} 
=(\mathcal T\mathcal C)(x)
\end{align*}
Thus, \(\mathcal C\) is a fixed point of \(\mathcal T\), and by uniqueness,
\[
\mathcal C(x)=J_k^\star(x), \qquad \forall x\in\X.
\]

Because \(\A\) is finite, the minimum in \eqref{eq:bellman_discounted} is attained at
each \(x\in\X\). Let \(a^\star:\X\to\A\) be any selector satisfying
\eqref{eq:areg_def}. Since \(J_k^\star\) is the fixed point of \(\mathcal T\), we have
\[
J_k^\star(x)
=
\max_{\substack{w\in\W\\ a^\benchmark\in\A}}
\Big\{
\rho(x,a^\star(x),a^\benchmark,w)+\gamma J_k^\star(x^+)
\Big\},
\quad \forall x\in\X.
\]
Hence the selector \(a^\star\) attains the continuation value \(J_k^\star\), and is
optimal for all \(t\ge k\).

\pstep{Step 3: Reduction to the prefix DP.}
By Step~1, for fixed prefix actions $\{\mu_i^\causal\}_{i=0}^{k-1}$ and fixed disturbances
\(w_{0}^{k-1}\), the prefix term is completely determined, and the remaining optimization
depends on the past only through the continuation state
\(
x_k=(s_k^\causal,s_0,w_{0}^{k-1})\in\X.
\)
Hence the inner optimization is exactly the continuation problem starting from \(x_k\).
By Step~2, its optimal value is \(J_k^\star(x_k)\). 
Therefore the regret in \eqref{eq:prefix_plus_continuation}
reduces to the \(k\)-stage prefix DP with terminal condition
\(
G_k(s,w_{0}^{k-1})
=
J_k^\star\big((s,s_0,w_{0}^{k-1})\big),
\)
which is exactly \eqref{eq:Jt_def}. Hence
\[
\mathrm{Reg}_k^\star(s_0)=G_0(s_0),
\]
proving \eqref{eq:Reg_equals_J0}. The minimizing selector in the prefix DP gives
the optimal actions for \(t<k\), while the selector in \eqref{eq:areg_def} is optimal
for \(t\ge k\).
\end{proof}

\begin{proof}[Theorem~\ref{thm:finite_main}]
For any $\pi_\causal\in\Pi_{\causal}$, $\pi_\benchmark\in\Pi_k$, and $w_{0}^{T-1}\in\W^T$, the regret admits the decomposition
\begin{align}
\label{eq:finite_gap_decomp}
&V_{T}(s_0, \pi_\benchmark;w_{0}^{T-1}) - V_T(s_0, \pi_\causal;w_{0}^{T-1})
=
\sum_{t=k}^{T-1} \rho_t 
+ \sum_{i=T-k}^{T-1}r_{i}^\benchmark
- \sum_{j=0}^{k-1}r_j^\causal
\end{align}
The tail value DP can be computed by a standard $k$-step planning DP on the fully known block $w_{T-k}^{T-1}$.

Fix $k\le t\le T$. For \(x\in\X\), let \(\widehat J_t(x)\) denote the value of the
stage-\(t\) continuation game starting from \(x_t=x\), namely the min--max problem over
causal actions from stages \(t,\dots,T-1\), lookahead actions from stages
\(t-k,\dots,T-k-1\), and disturbances from stages \(t,\dots,T-1\), with running regret
\(\rho\) and terminal tail value \(\Psi_k\). By one-step decomposition of this finite
game, \(\widehat J_t\) satisfies the backward DP \eqref{eq:finite_bellman} with
terminal condition \(J_T=\Psi_k\). Hence, by backward induction,
\(
\widehat J_t(x)=J_t(x), \qquad k\le t\le T.
\)

Now fix a causal policy prefix up to stage \(k-1\) and a disturbance prefix
\(w_{0}^{k-1}\). By the decomposition \eqref{eq:finite_gap_decomp}, the total regret
splits into the prefix loss \(-\sum_{t=0}^{k-1} r_t^\causal\) and a continuation term
that depends on the past only through
\(
x_k=(s_k^\causal,s_0,w_{0}^{k-1})\in\X,
\)
and is equal to
\(J_k(x_k)\). Therefore
\begin{align}
&\mathrm{Reg}_{T,k}^\star(s_0) 
=
\inf_{\pi_\causal\in\Pi_\causal}\ 
\sup_{w_{0}^{k-1}\in\W^k}
\left[
J_k\big(x_k\big)
-\sum_{t=0}^{k-1} r_t^\causal
\right].
\label{eq:finite_reduction_to_prefix}
\end{align}

Define \(\widehat G_h(s,w_{0}^{h-1})\) as the value of the resulting \(k\)-stage prefix
game from stage \(h\), with terminal value \(J_k((s,s_0,w_{0}^{k-1}))\). Again by
one-step decomposition, \(\widehat G_h\) satisfies the prefix DP in \eqref{eq:Jt_finite}.
Hence, by backward induction, $\widehat G_h=G_h$ for $0\le h\le k$.
Evaluating \eqref{eq:finite_reduction_to_prefix} at \(h=0\) gives
\[
\mathrm{Reg}_{T,k}^\star(s_0)=G_0(s_0).
\qedhere
\]
\end{proof}

\section{Conclusions} \label{sec:conclusions}
We studied worst-case dynamic regret in finite-state disturbance-driven systems, where a causal controller is evaluated against a $k$-lookahead benchmark. For the discounted infinite-horizon setting, we derived a Regret–Bellman operator on a tracking state space $\X$ and showed that it is a $\gamma$-contraction, ensuring existence and uniqueness of a fixed point and enabling value iteration. Coupling this fixed point with a finite-prefix DP yields the optimal regret value and a regret-optimal controller that is stationary for all $t \ge k$.

For the finite-horizon setting, we derived a backward DP with a tail value that accounts for the benchmark’s final $k$ rewards, and showed how a coupled prefix DP completes the synthesis, yielding an implementable solution for the optimal finite-horizon regret.

Numerical experiments on inventory management control illustrate the qualitative behavior of regret-optimal policies: they track the lookahead benchmark, remain robust to disturbance misspecification, and interpolate between MDP-based and robust designs as problem parameters vary.

Natural extensions include partially observed systems and semi-stochastic settings where the disturbance model is only approximately known, such as distributionally robust formulations. The Regret–Bellman operator may also provide an alternative to the standard Bellman operator in reinforcement learning.
\nocite{*}
\bibliographystyle{IEEEtran}
\bibliography{references}

\end{document}